\newcommand{\Z}{\ensuremath{\mathbf{Z}}}
\newcommand{\R}{\ensuremath{\mathbf{R}}}
\theoremstyle{plain}
\newtheorem{thm}{Theorem}
\newtheorem{lemma}[thm]{Lemma}
\title{On norms taking integer values on the integer lattice}
\author{Mikael de la Salle}
\thanks{Supported by ANR grant GAMME}
\address{CNRS-ENS de Lyon, UMPA UMR 5669
\newline F-69364 Lyon cedex 7, France}
\email{mikael.de.la.salle@ens-lyon.fr}
\keywords{Thurston norm, Convex body ; Norme de Thurston, Corps convexe}
\subjclass{}
\begin{document}

\begin{abstract}
We present a new proof of Thurston's theorem that the unit ball of a seminorm on $\R^d$ taking integer values on $\Z^d$ is a polyhedra defined by finitely many inequalities with integer coefficients.
\vskip 0.5\baselineskip

On pr\'esente une nouvelle preuve du th\'eor\`eme de Thurston selon lequel la boule unit\'e d'une seminorme sur $\R^d$ prenant des valeurs enti\`eres sur $\Z^d$ est un poly\`edre d\'efini par un nombre fini d'in\'egalit\'es \`a coefficients entiers.
\end{abstract}

\maketitle

\section{Version Française}

Dans l'étude de ce qu'on appelle maintenant la \emph{norme de Thurston} \cite{MR823443}, Thurston démontre que la boule unité d'une semi-norme $N$ sur $\R^d$ prenant des valeurs entières sur $\Z^d$ est un polyèdre défini par un nombre fini d'inégalités à coefficients entiers. Le but de cette note est de donner une preuve différente et à mon avis plus directe de ce théorème. On travaille avec la boule unité duale $B^*$, et on démontre que ses points exposés sont contenus dans $\Z^d$, ce qui suffit puisque $B^*$ est l'enveloppe convexe fermée de ses points exposés par le théorème classique de Straszewicz \cite{straszewicz}. Soit donc $y_0 \in B^*$ un point exposé. Cela signifie qu'il est l'unique point d'intersection de $B^*$ avec un hyperplan d'appui de $B^*$, dont on appelera $x_0$ un vecteur normal, normalisé pour que $\langle x_0,y_0\rangle =1$. Un petit raisonnement géométrique (Figure \ref{picture:dessin1} et formule (\ref{eq:toutestla})) permet de se convaincre que, pour un point $x$ de $\R^d$, $N(x)$ diffère de $\langle x,y_0\rangle$ d'une quantité tendant vers $0$ lorsque $x$ s'éloigne de l'origine en restant à distance bornée de la demi-droite dirigée par $x_0$. En appliquant ceci à une suite de tels points $x_n \in \Z^d$ et à $x_n+e_j$ avec $e_j$ le $j$-ème vecteur de la base canonique, on en déduit que $\langle e_j,y_0\rangle = \langle x_n+e_j,y_0\rangle - \langle x_n,y_0\rangle$ est arbitrairement proche de l'entier $N(x+e_j)-N(x)$, donc est entier, ce qu'il fallait démontrer.

\subsection*{Remerciements} Je remercie Pierre Dehornoy pour d'intéressantes discussions et pour ses encouragements à écrire cette note.

\begin{figure}
  \center
\begin{tikzpicture}[scale=1.3]
\begin{scope}[rotate=36,xscale=2]
  \draw (0,0) circle (1cm);
  \draw (0,0)--(-20:1cm) node[above left] {$y_0$} node {$\bullet$};
  \draw[loosely dotted] (0,0)--(-20:2.5cm);
  \draw[shift=(-20:1cm),rotate=-20] (-90:2cm)--(90:.8cm);
  \draw[dashed] (0,0)--(-35:1cm) node {$\bullet$} node[left] {$y_n$};
  \draw[dashed, shift=(-35:1cm),rotate=-35] (-90:1cm)--(90:1cm);
  \end{scope}
  \coordinate  (U) at ($ (-65:2cm) + (5cm,0)$);
  \draw[dotted,-latex] (0,0)-- node[near end, above] {$\lambda_n {x}_0$} (5cm,0);
  \draw[dotted,-latex] (5cm,0)-- node[above right] {${z}_n$} (U) node[below] {$x_n$};
  \draw[loosely dotted] ($(115:2.3cm) + (5cm,0)$)--(5cm,0);
  \draw (0,0)--(U);
\end{tikzpicture}
\caption{All lines which look orthogonal are orthogonal.} \label{picture:dessin1}
\end{figure}
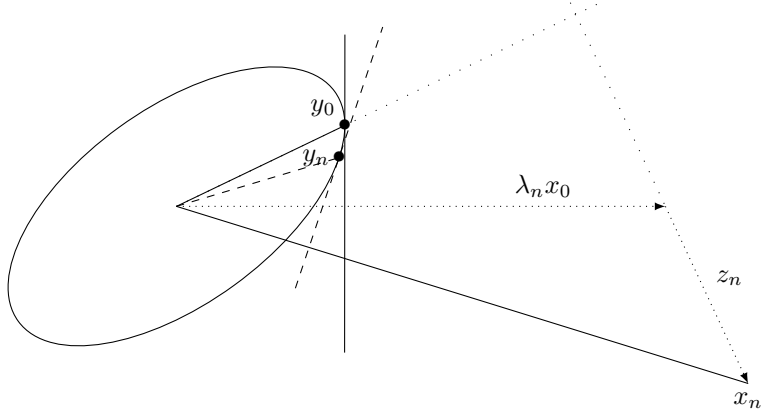

\section{English version}

Consider $\R^d$ with its scalar product $\langle x,y\rangle = \sum_{i=1}^n x_i y_i$ and associated norm $\|x\|$. The purpose of this note is to give an alternate proof of the following theorem of Thurston \cite[Theorem 2]{MR823443}.
\begin{thm}[Thurston]\label{thm:thurston} If $N$ is a seminorm on $\R^d$ taking integer values on $\Z^d$, then there is a finite subset $F \subset \Z^d$ such that $N(x) = \max_{y \in F} \langle x,y\rangle$ for all $x \in \R^d$.
\end{thm}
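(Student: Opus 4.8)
The plan is to pass to the dual unit ball $B^* = \{y \in \R^d : \langle x,y\rangle \le N(x) \text{ for all } x\}$, for which the bipolar theorem gives the support-function identity $N(x) = \max_{y \in B^*}\langle x,y\rangle$. Since $N$ is a seminorm, every $y \in B^*$ annihilates the kernel $V = \{x : N(x)=0\}$, so $B^* \subseteq V^\perp$ and $B^*$ is the unit ball of the genuine dual norm induced there; in particular $B^*$ is compact. By Straszewicz's theorem $B^*$ is the closed convex hull of its exposed points, so it suffices to prove that every exposed point $y_0$ lies in $\Z^d$: then, $\Z^d$ being discrete and $B^*$ compact, there are only finitely many such points, $B^*$ is their convex hull, and taking $F$ to be this finite set gives $N(x) = \max_{y \in B^*}\langle x,y\rangle = \max_{y \in F}\langle x,y\rangle$.

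So I would fix an exposed point $y_0$ (the case $y_0 = 0$ being trivial) together with a supporting hyperplane meeting $B^*$ only at $y_0$, and let $x_0$ be a normal vector normalized so that $\langle x_0, y_0\rangle = 1$; then $\langle x_0, y\rangle \le 1$ on $B^*$ with equality only at $y_0$. The geometric heart is the estimate that $0 \le N(x) - \langle x, y_0\rangle$ tends to $0$ as $x$ runs to infinity while staying within bounded distance of the ray $\R_{\ge 0}x_0$. Writing $x = \lambda x_0 + z$ with $\lambda \to +\infty$ and $\|z\| \le R$, one has
\[ N(x) - \langle x, y_0\rangle = \max_{y \in B^*}\bigl(\lambda(\langle x_0,y\rangle - 1) + \langle z, y - y_0\rangle\bigr), \]
which is nonnegative (take $y = y_0$). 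For the upper bound I would exploit that $y_0$ is the \emph{unique} maximizer of $\langle x_0, \cdot\rangle$ on the compact set $B^*$: given $\varepsilon > 0$ there is $\delta > 0$ with $\langle x_0, y\rangle \le 1 - \delta$ whenever $y \in B^*$ and $\|y - y_0\| \ge \varepsilon$. On the far part the term $\lambda(\langle x_0,y\rangle-1) \le -\lambda\delta$ beats the bounded quantity $\langle z, y-y_0\rangle$ once $\lambda$ is large, whereas on the near part the bracket is at most $R\varepsilon$; hence the maximum is $\le R\varepsilon$ for $\lambda$ large, and letting $\varepsilon \to 0$ yields the claim.

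It remains to exploit integrality, and this is where a genuine Diophantine input enters, which I expect to be the main obstacle, since $x_0$ need not point in a rational direction and one cannot simply take integer multiples of it. By Dirichlet's theorem on simultaneous approximation there are integers $q \to +\infty$ and $p \in \Z^d$ with $\|q x_0 - p\|$ bounded (by $\sqrt d$, say); setting $x_n = p$, $\lambda_n = q$, $z_n = p - q x_0$ produces lattice points $x_n = \lambda_n x_0 + z_n$ with $\lambda_n \to \infty$ and $\|z_n\|$ bounded. Both $x_n$ and $x_n + e_j$ then remain in a bounded tube around the ray, so the estimate applies to each, and subtracting gives $\bigl(N(x_n+e_j) - N(x_n)\bigr) - \langle e_j, y_0\rangle \to 0$. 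The first factor is an integer because $N$ takes integer values on $\Z^d$, so a sequence of integers converges to $\langle e_j, y_0\rangle$; this forces $\langle e_j, y_0\rangle \in \Z$ for every $j$, i.e. $y_0 \in \Z^d$. That completes the reduction and hence the theorem.
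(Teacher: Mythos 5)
Your proposal is correct, and its overall strategy is exactly the paper's: pass to the dual ball $B^*$, prove that every exposed point $y_0$ lies in $\Z^d$ via an asymptotic estimate $N(x) - \langle x, y_0\rangle \to 0$ along a bounded tube around the ray $\R_{\geq 0}x_0$, apply it to lattice points $x_n$ and to $x_n + e_j$, subtract, and finish with Straszewicz's theorem and finiteness of $B^* \cap \Z^d$. Two points of execution differ. For the tube estimate the paper argues through the maximizers: it takes $y_n \in B^*$ with $N(x_n) = \langle x_n, y_n\rangle$, shows $y_n \to y_0$ by compactness and the exposing property, and concludes with Cauchy--Schwarz; your splitting of the maximum over the far region $\{y \in B^* : \|y - y_0\| \geq \varepsilon\}$, where $\langle x_0, y\rangle \leq 1-\delta$, and its complement is an equivalent compactness argument and equally valid. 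More notably, the ``genuine Diophantine input'' you anticipated as the main obstacle is not needed at all: the estimate only requires $\|x_n - \lambda_n x_0\|$ to stay \emph{bounded}, not to be small, so it suffices to take $x_n$ to be the lattice point nearest to $n x_0$, which is within $\sqrt{d}/2$ of it --- this is exactly what the paper does. Your appeal to Dirichlet's simultaneous approximation theorem is correct but overkill; recognizing that crude rounding suffices is precisely what makes the argument as soft as it is.
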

This theorem is important in connexion with the Thurston norm \cite{MR823443}, which is a seminorm on the second homology group $H_2(M;\R)$ of an oriented $3$-manifold $M$ taking, by construction, integer values on $H_2(M;\Z)$. Theorem \ref{thm:thurston} allows to deduce the important feature of this seminorm that its unit ball is a polyhedron defined by linear inequalities with integer coefficients. 

Thurston's proof of Theorem \ref{thm:thurston} is short and quite easy, but not very enlightening (at least to me). The proof presented here is direct, by showing that the set of exposed points of the dual unit ball $B^* = \{y \in \R^d, \langle x,y\rangle \leq N(x) \forall x \in \R^d\}$
is contained in $\Z^d$. The set $B^*$ is a convex compact subset of $\R^d$ symmetric around $0$, with non-empty interior if and only if $N$ is a norm. By Hahn--Banach $B^*$ allows to recover $N$ by \begin{equation}\label{eq:dual_norm} N(x) = \max_{y \in B^*} \langle x,y\rangle.\end{equation}
A point $y_0 \in B^*$ is called exposed if there is a supporting hyperplane which intersects $B^*$ at $y_0$ only, or equivalently if there is $x_0 \in \R^d$ which \emph{exposes $y_0$}, \emph{i.e.} such that $\langle x_0,y\rangle < \langle x_0,y_0\rangle$ for every $y \in B^* \setminus \{y_0\}$. We shall use the following characterization of exposed points, which asserts that the $N$-seminorm of a point far in the direction of $x_0$ is almost attained at $y_0$.
\begin{lemma}\label{lem:exposed} Let $C$ be a compact convex set and $y_0 \in C$ be a point exposed by $x_0 \in \R^d$. If $x_n \in \R^d$ is such that $\sup_n \|x_n - n x_0\| <\infty$, then $\lim_n N(x_n) - \langle x_n,y_0\rangle = 0$.
\end{lemma}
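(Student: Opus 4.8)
The plan is to realize the quantity $N(x_n)-\langle x_n,y_0\rangle$ as a pairing against a maximizer of the support functional and then exploit that $y_0$ is the \emph{unique} maximizer of $\langle x_0,\cdot\rangle$ on $C$. Concretely, for each $n$ I would choose $y_n\in C$ attaining the maximum in (\ref{eq:dual_norm}), so that $N(x_n)=\langle x_n,y_n\rangle$. Since $y_0\in C$ we always have $N(x_n)\geq\langle x_n,y_0\rangle$, whence
\[ N(x_n)-\langle x_n,y_0\rangle = \langle x_n,y_n-y_0\rangle \geq 0 . \]
Writing $M:=\sup_n\|x_n-n x_0\|<\infty$ and $r_n:=x_n-n x_0$, I would split off the dominant direction,
\[ \langle x_n,y_n-y_0\rangle = n\,\langle x_0,y_n-y_0\rangle + \langle r_n,y_n-y_0\rangle . \]

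The exposedness hypothesis enters here: since $x_0$ exposes $y_0$, we have $\langle x_0,y\rangle\leq\langle x_0,y_0\rangle$ for every $y\in C$, with equality only at $y_0$. In particular the first term above is $\leq 0$, so the whole pairing is bounded above by the second term and
\[ 0 \leq N(x_n)-\langle x_n,y_0\rangle \leq \langle r_n,y_n-y_0\rangle \leq M\,\|y_n-y_0\| . \]
Thus the lemma is reduced to proving that the maximizers converge, $y_n\to y_0$, after which a squeeze finishes the argument.

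To establish $y_n\to y_0$ I would first extract a scalar estimate. Setting $\delta_n:=\langle x_0,y_0\rangle-\langle x_0,y_n\rangle\geq 0$, the displayed chain reads $0\leq -n\delta_n+\langle r_n,y_n-y_0\rangle$, so $n\delta_n\leq M\,\mathrm{diam}(C)$ and hence $\delta_n\to 0$; that is, $\langle x_0,y_n\rangle\to\langle x_0,y_0\rangle$. Now compactness of $C$ together with uniqueness of the maximizer upgrades this to full convergence: any subsequence of $(y_n)$ has a sub-subsequence converging to some $y_\ast\in C$, which by continuity satisfies $\langle x_0,y_\ast\rangle=\langle x_0,y_0\rangle$, forcing $y_\ast=y_0$ because $x_0$ exposes $y_0$. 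Hence $y_n\to y_0$, so $\|y_n-y_0\|\to 0$, and the bound above gives $N(x_n)-\langle x_n,y_0\rangle\to 0$.

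The genuinely substantive step is recognizing that everything reduces to the convergence $y_n\to y_0$ of the maximizers; the linear-in-$n$ term is harmless because it is non-positive, and indeed it is precisely what controls $\delta_n$. I expect the only delicate point to be the final upgrade from the scalar convergence $\langle x_0,y_n\rangle\to\langle x_0,y_0\rangle$ to convergence in $\R^d$, which is exactly where compactness and the uniqueness built into the word \emph{exposed} are indispensable; all the remaining manipulations are routine.
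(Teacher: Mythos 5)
Your proof is correct, and it shares the paper's overall skeleton: introduce maximizers $y_n \in C$ with $N(x_n) = \langle x_n,y_n\rangle$, prove $y_n \to y_0$ using compactness and the uniqueness built into exposedness, then finish by Cauchy--Schwarz against a bounded remainder. The execution differs in two ways worth noting. First, the paper normalizes $\langle x_0,y_0\rangle = 1$ (treating the degenerate case $C=\{0\}$ separately) and decomposes $x_n = \lambda_n x_0 + z_n$ with $z_n \perp y_0$, so that $\langle x_n,y_0\rangle = \lambda_n$ exactly and the conclusion becomes the sandwich $\lambda_n \leq N(x_n) \leq \lambda_n + o(1)$; your decomposition $x_n = n x_0 + r_n$ needs no normalization and no orthogonal projection, at the cost of having to discard the term $n\langle x_0,y_n-y_0\rangle$, which exposedness makes non-positive. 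Second, and more substantively, the paper proves $y_n \to y_0$ by continuity of $N$: any accumulation point $y'$ of $(y_n)$ satisfies $\langle x_0,y'\rangle = \lim_n \langle x_n/n, y_n\rangle = \lim_n N(x_n/n) = N(x_0)$, forcing $y'=y_0$. You instead extract the convergence quantitatively from the very chain of inequalities you are estimating: the non-positive linear term gives $n\delta_n \leq M\,\mathrm{diam}(C)$, so the support gap $\delta_n = \langle x_0,y_0-y_n\rangle$ is $O(1/n)$, and compactness plus uniqueness of the maximizer upgrades this to $y_n \to y_0$. Your route is slightly more self-contained (it never invokes continuity of $N$ or the convergence $x_n/n \to x_0$) and yields a rate for $\delta_n$ as a bonus, while the paper's argument is a one-line appeal to continuity of the support function; both are complete proofs.
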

Before we prove this lemma, let us explain how it implies Thurston's theorem. Let $y_0 \in B^*$ be a point exposed by $x_0$. For $1 \leq j \leq d$, let $e_j \in \Z^d$ be the $j$-th coordinate vector. Pick $x_n \in \Z^d$ the closest point (in euclidean distance) to $nx_0$. By the lemma and the fact that $N(x_n) \in \Z$ we have $\lim_n d(\langle x_n,y_0\rangle,\Z) =0$. Similarly by the lemma applied to $x_n + e_j$, we have $\lim_n d(\langle x_n+e_j,y_0\rangle,\Z) =0$, and hence
\[ d(\langle e_j,y_0\rangle,\Z) \leq \lim_n d(\langle x_n,y_0\rangle,\Z) + d(\langle x_n+e_j,y_0\rangle,\Z) = 0.\]
This proves that $\langle e_j,y_0\rangle \in \Z$.

Since the preceding is valid for every $j$ and every exposed point $y_0$, we have proved that the set $\mathrm{exp}(B^*)$ of \emph{exposed points} of $B^*$ is contained in $\Z^d$; it is finite because it is bounded. We conclude the proof by the classical theorem by Straszewicz \cite{straszewicz} that, as every convex compact subset of $\R^d$, $B^*$ is the closed convex hull of $\mathrm{exp}(B^*)$, and by \eqref{eq:dual_norm} $N(x) = \sup_{y \in \mathrm{exp}(B^*)} \langle x,y\rangle$.

\begin{proof}[Proof of Lemma \ref{lem:exposed}]
We can normalize $x_0$ so that $\langle x_0,y_0\rangle=1$ (unless $\langle x_0,y_0\rangle=0$, which implies that $C=\{0\}$). In particular $N(x_0)=1$.

Decompose $x_n = \lambda_n x_0 + z_n$ with $z_n \perp y_0$, so $\lambda_n  = \langle x_n,y_0\rangle \leq N(x_n)$. Also, $z_n$ is bounded because $x_n$ stays at bounded distance from $n x_0$ and $\lambda_n \sim n$ is positive for $n$ large. 

Let $y_n \in C$ such that $N(x_n) = \langle x_n,y_n\rangle$. See Figure \ref{picture:dessin1}. Since $x_0$ exposes $y_0$ and $C$ is compact we have  $\lim_n \|y_n - y_0\|=0$ (if $y' \in C$ is an accumulation point of the sequence $y_n$, then we have $N(x_0)  = \lim_n N(x_n/n) = \lim_n \langle x_n/n,y_n\rangle = \langle x_0,y'\rangle$, so that $y' = y_0$ because $x_0$ exposes $y_0$). Using that $\langle x_0,y_n\rangle \leq 1$ and that $\langle z_n,y_n\rangle = \langle z_n,y_n-y_0\rangle$, we therefore get by the Cauchy--Schwarz inequality
\begin{equation}\label{eq:toutestla} \lambda_n \leq N(x_n) =  \lambda_n \langle x_0,y_n\rangle + \langle z_n,y_n\rangle \leq  \lambda_n + \|z_n\|\|y_n-y_0\| = \lambda_n+o(1).\end{equation}
\end{proof}

\end{document}